\documentclass[11pt]{article}

\usepackage[T1]{fontenc}
\usepackage{lmodern}
\usepackage{microtype}
\usepackage[margin=1in]{geometry}
\usepackage{amsmath,amssymb,amsthm,mathtools}
\usepackage[hidelinks]{hyperref}

\allowdisplaybreaks

\newtheorem{theorem}{Theorem}[section]
\newtheorem{proposition}[theorem]{Proposition}
\newtheorem{lemma}[theorem]{Lemma}
\newtheorem{corollary}[theorem]{Corollary}
\theoremstyle{remark}

\newcommand{\Mat}{\operatorname{Mat}}
\newcommand{\tr}{\operatorname{tr}}
\newcommand{\Res}{\operatorname{Res}}
\newcommand{\rat}{\mathrm{rat}}
\newcommand{\id}{\mathrm{id}}
\newcommand{\Q}{\mathbb{Q}}

\title{A Matrix-Degree Obstruction to Rational Generation\\
of Boolean-Lattice Pseudo-Roots}
\author{Mahesh Ramani\thanks{Email: \href{mailto:mahesh.ramani.iyer@gmail.com}{\nolinkurl{mahesh.ramani.iyer@gmail.com}}}}
\date{}

\hypersetup{
  pdftitle={A Matrix-Degree Obstruction to Rational Generation of Boolean-Lattice Pseudo-Roots},
  pdfauthor={Mahesh Ramani},
  pdfsubject={Noncommutative algebra, free skew fields, and rational maps},
  pdfkeywords={pseudo-roots, free skew field, noncommutative rational functions, rational maps, matrix degree, cographs}
}

\begin{document}

\maketitle

\begin{abstract}
For the neighborhood seed associated with the four-vertex path $P_4$, the
diamond operations do not recover all Boolean-lattice pseudo-roots.  The
corresponding question for unrestricted rational operations in the free skew
field is subtler: the seed map has an invertible linearization and therefore a
unique formal inverse near every generic scalar point.  We prove that this
formal inverse is not free rational.  A symmetric one-parameter curve of
$2\times2$ matrix outputs has a formal inverse whose coefficient field contains
an element of degree three over $\Q(t)$.  An exact elimination in a quadratic
Pauli algebra produces the irreducible cubic.  Its conjugate inverse branches
are unramified, forcing the generic matrix degree of the seed map to be at least
three in every size $n\geq2$.  This contradicts the degree-one consequence of
any free rational inverse.  The same matrix-degree argument, without
specializing a hypothetical inverse, extends the obstruction to every graph
containing an induced $P_4$.
\end{abstract}

\medskip
\noindent\textbf{2020 Mathematics Subject Classification.}
Primary 16K40; Secondary 16S10, 16S85, 14E05, 05C75.

\smallskip
\noindent\textbf{Keywords.}
Pseudo-roots; free skew fields; noncommutative rational functions; rational
maps; matrix degree; cographs; exact elimination.

\section{Introduction}

Let $B_n$ be the Boolean lattice on $[n]=\{1,\ldots,n\}$.  Its cover edge
$A\to A\cup\{i\}$ is labelled by a pseudo-root $x_{A,i}$.  This
Boolean-lattice model and its diamond relations arise from factorizations of
noncommutative polynomials; see
\cite{GelfandRetakhWilson2001,GelfandEtAl2005,RetakhSaks2021}.  If two lower
pseudo-roots are denoted by $x$ and $y$, the upper pseudo-roots in the
corresponding diamond are $C(y,x)$ and $C(x,y)$, where
\begin{equation}\label{eq:CD}
 C(y,x)=(y-x)x(y-x)^{-1},
 \qquad
 D(y,x)=(y-x)^{-1}x(y-x).
\end{equation}
Here $D$ denotes the reverse diamond operation, used below to invert a diamond.
All free rational expressions below are interpreted in the free skew field,
or on matrix tuples for which every displayed inverse exists.  We use the
standard universal-field and matrix-evaluation framework for noncommutative
rational functions \cite{Cohn1985,KaliuzhnyiVerbovetskyiVinnikov2014}.

For a graph $G$ on $[n]$, its neighborhood seed consists of the $n$
pseudo-roots
\[
  Y_G=\{x_{N_G(i),i}:i\in[n]\}.
\]
At the level of diamond closure, the cographs are precisely the graphs for
which $Y_G$ generates all pseudo-roots \cite[Theorem~1.1]{Ramani2026}.
Cographs are equivalently the finite graphs with no induced $P_4$
\cite{CorneilLerchsBurlingham1981}.  The first graph outside this class is the
path $P_4$.  Write its bottom pseudo-roots as $a,b,c,d$.  Its seed map is
\begin{equation}\label{eq:FP4intro}
 F_{P_4}(a,b,c,d)
 =\bigl(C(b,a),\ C(C(a,c),C(a,b)),\
 C(C(b,d),C(b,c)),\ C(c,d)\bigr).
\end{equation}

The main result is the following.

\begin{theorem}\label{thm:main}
Let $K$ be a field of characteristic zero.  If
\begin{align*}
 p&=C(b,a),
 &q&=C(C(a,c),C(a,b)),\\
 r&=C(C(b,d),C(b,c)),
 &s&=C(c,d),
\end{align*}
then
\[
 K\langle p,q,r,s\rangle_{\rat}
 \subsetneq
 K\langle a,b,c,d\rangle_{\rat}.
\]
Equivalently, $F_{P_4}$ has no free rational inverse.
\end{theorem}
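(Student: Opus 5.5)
The argument runs through matrix evaluations, as the abstract indicates. Suppose, for contradiction, that $K\langle p,q,r,s\rangle_{\rat}=K\langle a,b,c,d\rangle_{\rat}$. Then each of $a,b,c,d$ is a free rational expression $G_j(p,q,r,s)$. Evaluating on $n\times n$ matrices, $G=(G_1,\dots,G_4)$ gives a rational map $\Mat_n(K)^4\dashrightarrow\Mat_n(K)^4$ with $G\circ F_{P_4}=\id$ on a Zariski-dense open set. The identity holds in the free skew field, so it specializes on the domains of both sides, which are nonempty because generic tuples lie there. Hence for every $n\ge1$ the evaluated seed map $F_n:\Mat_n(K)^4\dashrightarrow\Mat_n(K)^4$ is birational onto its image. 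It is also dominant, because its linearization at a generic scalar point is invertible. So $F_n$ has generic degree one: a generic fibre consists of a single point.

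The plan is to contradict this for $n=2$ by showing that $F_2$ has generic degree at least three. I pass to the algebraic closure $\overline{K}$ and may assume $\Q\subseteq K$. The steps are as follows.

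\textbf{Step 1.} Choose a symmetric one-parameter family of $2\times2$ output tuples $(p(t),q(t),r(t),s(t))$. I would use a quadratic Pauli-type subalgebra, for example matrices of the form $\alpha+\beta\sigma$ with a fixed $\sigma^2=\pm1$, together with a second generator that anticommutes with $\sigma$. The family should pass through (a perturbation of) an image of a generic scalar point, so that the formal inverse is defined along it.

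\textbf{Step 2.} Solve $F_2(a,b,c,d)=(p,q,r,s)$ exactly along this curve. Each application of $C$ is a conjugation, and inside the Pauli algebra conjugation can be parametrized explicitly. Eliminating the conjugating elements should reduce the system to a single polynomial equation in one scalar unknown over $\Q(t)$. I expect this equation to be a cubic. I then check that the cubic is irreducible over $\Q(t)$, for instance by specializing $t$ to a rational value and showing there is no rational root, using the rational root test together with Gauss's lemma.

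\textbf{Step 3.} Conclude that the coefficient field of the formal inverse contains an element of degree three over $\Q(t)$. Its three Galois-conjugate roots give three distinct preimages of a generic point of the curve. I must also show these branches are unramified: the discriminant of the cubic is not identically zero, and at each conjugate point the Jacobian of $F_2$ stays invertible. Unramifiedness lets the three preimages persist under small perturbations of the output off the curve, so that a generic fibre of $F_2$ has at least three points. Thus $\deg F_2\ge3$.

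\textbf{Step 4.} Extend to every $n\ge2$ by block-diagonal embedding. Put the $2\times2$ configuration in one block and a generic scalar point in the complementary block. Distinct unramified local preimages then remain distinct and unramified, so $\deg F_n\ge3$ for all $n\ge2$. Since the degree-one conclusion above holds for every $n$, this contradicts the existence of a free rational inverse. Step 4 is not logically needed for the theorem, because $n=2$ alone already gives the contradiction.

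The main obstacle is Step 2. The difficulty is choosing the curve so that the elimination closes up into a single cubic whose irreducibility can be certified. I would first try the most symmetric ansatz: impose the reflection symmetry of $P_4$ ($a\leftrightarrow d$, $b\leftrightarrow c$) on the outputs. This reduces the unknowns, and the cubic should appear in the single remaining conjugation parameter.

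A secondary point is rigor in Step 3. I need to show that the extra conjugate preimages are genuine points of the domain, meaning that all inverses in $C$ exist there, and not spurious solutions introduced by clearing denominators. I would handle this by verifying that the differences $y-x$ are nonzero in the relevant determinants modulo the cubic.
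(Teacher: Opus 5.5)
\textbf{Gap in the opening reduction.} Your route is the paper's: a symmetric $2\times2$ output curve through a scalar point, an irreducible cubic in the coefficient field of the formal inverse, conjugate unramified preimages, a simple-fibre degree bound, and direct-sum amplification. The problem is your claim that a free rational left inverse specializes at \emph{every} size, in particular $n=2$, ``because generic tuples lie in the domains.''

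That is false in general. A free rational expression can have empty domain on $\Mat_2$, for instance if it involves the inverse of $[[x,y]^2,z]$. This element is nonzero in the free algebra but vanishes identically on $2\times2$ matrices, since $[x,y]^2$ is scalar there. Some elements of the free skew field, such as that inverse itself, have no representative defined on $\Mat_2$ at all. So you cannot simply choose a better expression for the hypothetical inverse.

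What the hypothesis actually gives is birationality of $F_n$ for \emph{some} $n\ge2$. Each component has a nonempty domain at some size, and domains are stable under direct sums. Take a common multiple of these sizes, doubled if necessary to reach $n\ge2$. At that fixed size the finitely many Zariski-open domains intersect, because the tuple space is irreducible. This is exactly the polynomial-identity loophole the paper avoids. So your remark that Step 4 is logically unnecessary and that ``$n=2$ alone already gives the contradiction'' is wrong. The bound $\deg F_n\ge3$ must hold for all $n\ge2$, and with that correction your outline closes.

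\textbf{Step 4 then needs real justification.} Unramifiedness of the $2\times2$ points does not by itself control the $n\times n$ Jacobian in the off-diagonal block directions. The paper pads each matrix with the scalar block of its own centre, $0,1,2,3$. The amplified branch then tends to the scalar point $(0,I_n,2I_n,3I_n)$, where the Jacobian is the identity. Along the branch the Jacobian determinant is therefore $1+O(t)$, a nonzero element of the field $L$ generated by the branch entries, and it stays nonzero under every $\Q(t)$-embedding of $L$. If you pad with a ``generic scalar point'' with different centres, you must check the mixed-block Jacobian separately.

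\textbf{Two smaller points.} In Step 3, take the three preimages from the $\Q(t)$-embeddings of $L$, which contains the degree-three element $z$, rather than from the roots of the cubic alone. Step 2, the actual source of the degree-three element, is only anticipated in your proposal. In the paper it comes from a $\kappa$-twisted reversal symmetry forcing $G=\kappa(B)$ and $D=\kappa(A)$, followed by rational recovery of $A$ from $B=xX+yY+zZ$, the equation $tXh=hB$, and a resultant in $x$ that yields the explicit cubic $P(t,z)$.
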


The proof uses matrix evaluations, but not merely a failure at one fixed
matrix size.  For every $n\geq2$, the rational map induced by $F_{P_4}$ on four
$n\times n$ matrices has generic degree at least three.  A finite collection of
free rational recovery expressions necessarily has a common matrix domain at
some size, where it would give a rational left inverse and force generic degree
one.  This all-size argument avoids the polynomial-identity loophole inherent
in a single small-matrix specialization.

The only computer-assisted step is an exact resultant calculation.  The
ancillary file \nolinkurl{p4_matrix_degree_certificate.py} performs the
calculation over $\Q$, verifies the formal branch used to select the relevant
factors, and checks the irreducibility specialization.  No floating-point
arithmetic or numerical root selection is used.

\section{Diamond inversion and the \texorpdfstring{$P_4$}{P4} reductions}

\begin{lemma}[Diamond inversion]\label{lem:diamond-inversion}
Let
\[
 u=C(y,x),\qquad v=C(x,y).
\]
Then
\[
 v-u=y-x,\qquad x=D(v,u),\qquad y=D(u,v).
\]
Consequently, the rational transformations
\[
 (x,y)\longmapsto (C(y,x),C(x,y))
 \quad\text{and}\quad
 (u,v)\longmapsto(D(v,u),D(u,v))
\]
are mutually inverse wherever their displayed inverses exist.
\end{lemma}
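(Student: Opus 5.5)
The identity $v-u=y-x$ comes first, since it drives everything else. Put $w=y-x$, which is invertible wherever $C(y,x)$ and $C(x,y)$ are defined. Then $u=wxw^{-1}$. For $v$, note $x-y=-w$, so
\[
 v=C(x,y)=(x-y)\,y\,(x-y)^{-1}=(-w)\,y\,(-w)^{-1}=wyw^{-1},
\]
because the two signs cancel. Hence
\[
 v-u=w(y-x)w^{-1}=wuw^{-1}\!\!\!\!\quad\text{no: } v-u=w(y-x)w^{-1}=www^{-1}=w=y-x .
\]
More cleanly, $v-u=w\,(y-x)\,w^{-1}=w\,w\,w^{-1}=w$. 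In particular $v-u$ is invertible exactly when $y-x$ is.

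Next I recover $x$ and $y$. By definition $D(v,u)=(v-u)^{-1}u(v-u)$, and substituting $v-u=w$ gives
\[
 D(v,u)=w^{-1}(wxw^{-1})w=x .
\]
For the other one, $D(u,v)=(u-v)^{-1}v(u-v)$ with $u-v=-w$. The signs cancel again, so $D(u,v)=w^{-1}(wyw^{-1})w=y$.

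This shows that $(u,v)\mapsto(D(v,u),D(u,v))$ is a left inverse of $(x,y)\mapsto(C(y,x),C(x,y))$ wherever the latter is defined. For the reverse composition I run the same argument with the roles of $C$ and $D$ swapped. Start from $(u,v)$ with $u-v$ invertible and set $x=D(v,u)$, $y=D(u,v)$, $z=v-u$. The same sign cancellation gives $x=z^{-1}uz$ and $y=z^{-1}vz$, so $y-x=z^{-1}(v-u)z=z$. Conjugating back with $y-x=z$ then yields $C(y,x)=u$ and $C(x,y)=v$.

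Throughout, I interpret the identities either in the free skew field, where $y-x\neq0$ is automatically invertible, or on matrix tuples where the displayed inverses exist. The domain statements match up because $v-u=y-x$ in one direction and $y-x=v-u$ in the other. There is no real obstacle here. The only points needing care are the sign cancellation in $C(x,y)$ and $D(u,v)$, where a scalar $-1$ appears on both sides and cancels, and noting that invertibility of the difference is preserved in both directions.
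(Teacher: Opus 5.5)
Your proof is correct and is the paper's argument: write both $u$ and $v$ as conjugates by $w=y-x$ (the sign in $x-y$ cancels), read off $v-u=y-x$, and conjugate back, with your explicit check of the reverse composition simply spelling out the paper's ``substituting these formulas back into $C$.'' Do delete the aborted display line containing $wuw^{-1}$ and ``no:'', since it is a false start that the following line supersedes.
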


\begin{proof}
Since $x-y=-(y-x)$,
\[
 v=(y-x)y(y-x)^{-1},
 \qquad
 u=(y-x)x(y-x)^{-1}.
\]
Thus $v-u=y-x$.  It follows that
\[
 D(v,u)=(v-u)^{-1}u(v-u)
 =(y-x)^{-1}(y-x)x(y-x)^{-1}(y-x)=x.
\]
The proof of $D(u,v)=y$ is identical.  Substituting these formulas back into
$C$ proves the final assertion.
\end{proof}

For $P_4=1-2-3-4$, set
\[
 a=x_{\varnothing,1},\quad b=x_{\varnothing,2},\quad
 c=x_{\varnothing,3},\quad d=x_{\varnothing,4}.
\]
The four neighborhood seeds are
\begin{equation}\label{eq:seeds}
\begin{aligned}
 p&=x_{\{2\},1}=C(b,a),\\
 q&=x_{\{1,3\},2}=C(C(a,c),C(a,b)),\\
 r&=x_{\{2,4\},3}=C(C(b,d),C(b,c)),\\
 s&=x_{\{3\},4}=C(c,d).
\end{aligned}
\end{equation}
Define the missing adjacent partners
\begin{equation}\label{eq:TW}
 T=C(a,b),\qquad W=C(d,c).
\end{equation}

\begin{proposition}[Partner reduction]\label{prop:partner}
The map $F_{P_4}$ is free rationally invertible if and only if $T$ and $W$ are
simultaneously free rational functions of $p,q,r,s$.
\end{proposition}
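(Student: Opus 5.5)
We prove both directions by repeated use of Lemma~\ref{lem:diamond-inversion}. ``Free rationally invertible'' means that $a,b,c,d$ lie in $K\langle p,q,r,s\rangle_{\rat}$. Since $T$ and $W$ are free rational in $a,b,c,d$, the forward direction is immediate.

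For the converse, assume $T,W\in K\langle p,q,r,s\rangle_{\rat}$. We recover the outer bottom pseudo-roots first and then the inner ones.

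\emph{Step 1: recover $a$ and $b$.} By definition $p=C(b,a)$ and $T=C(a,b)$, so $(p,T)$ is the diamond image of $(a,b)$ with $u=p$ and $v=T$. Lemma~\ref{lem:diamond-inversion} then gives
\[
 a=D(T,p),\qquad b=D(p,T).
\]

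\emph{Step 2: recover $c$ and $d$.} Symmetrically, $W=C(d,c)$ and $s=C(c,d)$. Taking $(x,y)=(c,d)$, so that $u=W$ and $v=s$, the lemma gives
\[
 c=D(s,W),\qquad d=D(W,s).
\]

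With all four bottom pseudo-roots expressed in $p,q,r,s$, the map $F_{P_4}$ has a free rational left inverse. Consequently
\[
 K\langle a,b,c,d\rangle_{\rat}=K\langle p,q,r,s\rangle_{\rat}.
\]
Note that $q$ and $r$ are not used explicitly in this argument; they enter only through the assumed expressions for $T$ and $W$.

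The only delicate point is well-definedness in the free skew field. Each $D$ above requires an inverse to exist:
\begin{itemize}
\item For $D(T,p)$ and $D(p,T)$ we need $T-p$ invertible. By the lemma $T-p=b-a$, which is nonzero in the free skew field and hence invertible there.
\item For $D(s,W)$ and $D(W,s)$ we need $s-W$ invertible. By the lemma $s-W=d-c\neq0$.
\end{itemize}
The identities of Lemma~\ref{lem:diamond-inversion} hold in the skew field, so substituting the rational expressions for $T$ and $W$ preserves them. This completes both directions.
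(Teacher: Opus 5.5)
Your proof is correct and follows the paper's argument. In one direction Lemma~\ref{lem:diamond-inversion} gives $a=D(T,p)$, $b=D(p,T)$, $c=D(s,W)$, $d=D(W,s)$; in the other, $T=C(a,b)$ and $W=C(d,c)$. Your explicit check that $T-p=b-a$ and $s-W=d-c$ are nonzero, and hence invertible in the free skew field, is a detail the paper leaves implicit.
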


\begin{proof}
If $T$ and $W$ are recoverable, Lemma~\ref{lem:diamond-inversion} gives
\[
 a=D(T,p),\qquad b=D(p,T),\qquad
 d=D(W,s),\qquad c=D(s,W).
\]
Conversely, recovery of $a,b,c,d$ gives $T=C(a,b)$ and $W=C(d,c)$.
\end{proof}

There is also a useful conjugator form of this reduction.  Put
\[
 u=b-a,\qquad v=c-d.
\]
The relation in Lemma~\ref{lem:diamond-inversion} gives
\begin{equation}\label{eq:conjugators}
 T-p=u,\qquad W-s=v,\qquad
 a=u^{-1}pu,\quad b=u^{-1}pu+u,\quad
 d=v^{-1}sv,\quad c=v^{-1}sv+v.
\end{equation}
Hence
\[
 K\langle a,b,c,d\rangle_{\rat}
 =K\langle p,s,u,v\rangle_{\rat},
\]
and partner recovery is equivalent to recovery of $u$ and $v$.

For completeness, define the second-layer partners
\[
 M=x_{\{1,2\},3},\qquad N=x_{\{3,4\},2}.
\]
The relevant diamonds give
\[
 T=D(M,q),\qquad W=D(N,r),
\]
so Proposition~\ref{prop:partner} is equivalently a recovery problem for
$M,N$.  An exact two-variable consistency system is
\begin{align}
 D(q,M)
 &=C\!\left(D(D(M,q),p),D(s,D(N,r))\right),\label{eq:consistency-M}\\
 D(r,N)
 &=C\!\left(D(D(N,r),s),D(p,D(M,q))\right).\label{eq:consistency-N}
\end{align}
For the true partners, the left sides are respectively $C(a,c)$ and
$C(d,b)$, so the equations hold.  Conversely, suppose $M,N$ solve
\eqref{eq:consistency-M}--\eqref{eq:consistency-N}.  Define
\[
 T=D(M,q),\quad W=D(N,r),\quad
 a=D(T,p),\quad b=D(p,T),\quad
 d=D(W,s),\quad c=D(s,W).
\]
The adjacent pairs reconstruct $p,s$ by Lemma~\ref{lem:diamond-inversion}.
The two consistency equations say
\[
 D(q,M)=C(a,c),\qquad D(r,N)=C(d,b).
\]
Applying the mutually inverse diamond transformations to the upper pairs
$(q,M)$ and $(r,N)$ now reconstructs the middle seeds $q,r$.  Thus every
solution for which the displayed inverses exist gives a preimage of
$(p,q,r,s)$.

\section{Formal invertibility and matrix degree}

We record the distinction between formal and rational inversion.  Let
$\lambda_1,\ldots,\lambda_g$ be distinct central scalars and write a matrix or
free variable as $x_i=\lambda_i+X_i$, where $X_i$ has positive degree.

\begin{lemma}[First-order conjugation]\label{lem:first-order}
If $x=\lambda+X$, $y=\mu+Y$, and $\lambda\ne\mu$ are central scalars, then
\[
 C(y,x)=\lambda+X+O(2),
 \qquad
 D(y,x)=\lambda+X+O(2).
\]
\end{lemma}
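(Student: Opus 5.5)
We expand directly in the grading by degree in the perturbations $X,Y$. Here $O(2)$ denotes terms of total degree at least two in the positive-degree parts. The key input is that $y-x=(\mu-\lambda)+(Y-X)$ is invertible as a formal series, because $\mu-\lambda$ is a nonzero central scalar. Explicitly,
\[
 (y-x)^{-1}=(\mu-\lambda)^{-1}\sum_{k\ge0}\Bigl(-\tfrac{Y-X}{\mu-\lambda}\Bigr)^{k}
 =(\mu-\lambda)^{-1}-(\mu-\lambda)^{-2}(Y-X)+O(2).
\]
So both $y-x$ and its inverse have the form ``nonzero scalar $+$ first-order term $+$ higher order''.

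Write $\delta=\mu-\lambda$ and $E=Y-X$. Then
\[
 C(y,x)=(\delta+E)(\lambda+X)\bigl(\delta^{-1}-\delta^{-2}E+O(2)\bigr).
\]
We keep only the terms of degree at most one.

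The degree-zero term is $\delta\lambda\delta^{-1}=\lambda$, since all three factors are central scalars.

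The degree-one terms come from choosing the first-order part in exactly one factor:
\[
 E\lambda\delta^{-1}+\delta X\delta^{-1}-\delta\lambda\delta^{-2}E
 =\lambda\delta^{-1}E+X-\lambda\delta^{-1}E=X.
\]
This uses only that $\lambda$ and $\delta$ are central, so they commute past $E$ and $X$. Hence $C(y,x)=\lambda+X+O(2)$.

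The computation for $D(y,x)=(y-x)^{-1}x(y-x)$ is the same with the outer factors swapped. Its degree-one part is
\[
 -\delta^{-2}E\lambda\delta+\delta^{-1}X\delta+\delta^{-1}\lambda E=X.
\]
Conceptually, conjugation by an invertible element $\delta(1+\delta^{-1}E)$ changes $\lambda+X$ only by the commutator $[\delta^{-1}E,\lambda+X]+O(2)$. Since $\lambda$ is central, this commutator is $[\delta^{-1}E,X]$, which is already of order two.

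No step is a real obstacle. The one point to state explicitly is that the formal inverse of $y-x$ exists and that truncation modulo $O(2)$ is compatible with products. This holds because $O(2)$ is a two-sided ideal in the completed free algebra (or in matrix power series), so the truncation is a ring homomorphism to the quotient modulo $O(2)$.
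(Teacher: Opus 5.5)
Your proposal is correct and follows the paper's proof. Like the paper, you expand $(y-x)^{-1}=\delta^{-1}-\delta^{-2}E+O(2)$ with $\delta=\mu-\lambda$ and $E=Y-X$, substitute into the conjugation, and observe that the two $\lambda\delta^{-1}E$ terms cancel by centrality, leaving $\lambda+X$; you simply write out the degree-one bookkeeping for both $C$ and $D$ that the paper leaves implicit.
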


\begin{proof}
Put $\delta=\mu-\lambda$ and $E=Y-X$.  Then
\[
 (y-x)^{-1}=(\delta+E)^{-1}
 =\delta^{-1}-\delta^{-2}E+O(2).
\]
Substitution into $(y-x)x(y-x)^{-1}$ shows that the degree-zero and degree-one
terms are $\lambda$ and $X$; the terms involving $\lambda E$ cancel because
$\lambda$ and $\delta$ are central.  The calculation for $D$ is the same.
\end{proof}

\begin{proposition}[Formal inverse]\label{prop:formal}
At every scalar point
\[
 (a,b,c,d)=(\lambda_1,\lambda_2,\lambda_3,\lambda_4)
\]
with distinct $\lambda_i$, the linear part of $F_{P_4}$ is the identity.
Consequently, $F_{P_4}$ has a unique inverse in the completed noncommutative
power-series algebra at that point.  For every matrix size $n$, the induced
rational map $F_{P_4,n}$ has identity Jacobian there and is dominant and
generically finite.
\end{proposition}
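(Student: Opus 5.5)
The plan is to apply Lemma~\ref{lem:first-order} repeatedly and track the constant terms of every intermediate expression. The first step is to check that each conjugation occurring in \eqref{eq:seeds} compares two inputs with \emph{distinct} constant terms, so the lemma applies at each stage.

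At the bottom layer we have $a=\lambda_1+A$, $b=\lambda_2+B$, $c=\lambda_3+C'$, $d=\lambda_4+D'$.
\begin{itemize}
\item $p=C(b,a)$ needs only $\lambda_1\neq\lambda_2$, and gives $p=\lambda_1+A+O(2)$. Likewise $s=C(c,d)=\lambda_4+D'+O(2)$.
\item For $q$, the inner terms are $C(a,c)=\lambda_3+C'+O(2)$ and $C(a,b)=\lambda_2+B+O(2)$. Their constant terms $\lambda_3\neq\lambda_2$ differ, so applying the lemma again gives $q=\lambda_2+B+O(2)$.
\item Symmetrically, $r=C(C(b,d),C(b,c))=\lambda_3+C'+O(2)$.
\end{itemize}
Hence $F_{P_4}=(\lambda_1,\lambda_2,\lambda_3,\lambda_4)+\id+O(2)$ in the shifted variables. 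The $O(2)$ bookkeeping is compatible with composition, because substituting a series with zero constant term into an $O(2)$ term stays $O(2)$.

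The formal inverse then follows from the standard noncommutative inverse function theorem in the completed free algebra $K\langle\!\langle A,B,C',D'\rangle\!\rangle$. We write $F=\lambda+\id+H$ with $H$ of order at least two. The inverse $G$ is then the unique solution of $G(P)=P-H(G(P))$, where $P$ denotes the shifted output variables. We solve this degree by degree: the homogeneous component of $G$ in degree $k$ is determined by the components of lower degree, since $H$ has order at least two. This gives both existence and uniqueness. Composition of power series with zero constant term is well defined in the completion, and the recursion also yields $G\circ F=\id$, because a one-sided inverse of a map tangent to the identity is two-sided.

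For matrix size $n$, each entry of $F_{P_4,n}$ is a commutative rational function of the $4n^2$ entries. These functions are regular at the scalar point, since there $b-a=(\lambda_2-\lambda_1)I$ is invertible and similarly for the other denominators. Evaluating the formal expansion on matrices shows that the differential at the scalar point is the identity on $\Mat_n(K)^4$. A rational map between irreducible varieties of the same dimension $4n^2$ whose Jacobian is invertible at some point has a surjective differential there. Over characteristic zero, generic smoothness (or simply the rank of the Jacobian) then gives that $F_{P_4,n}$ is dominant, and that its generic fibers are zero-dimensional, i.e.\ the map is generically finite.

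The main obstacle is only bookkeeping. We must confirm that in $q$ and $r$ the inner conjugations produce distinct central constants, since this is where distinctness of all four $\lambda_i$ (not only adjacent ones) is used. We must also justify that the first-order expansion is uniform when the inputs are themselves series rather than variables. Once that is checked, everything else is the formal inverse function theorem.
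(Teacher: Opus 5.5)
Your proof is correct and follows the paper's argument: you iterate Lemma~\ref{lem:first-order} from the inside out to get identity linear part, invert degree by degree in the completed free algebra, and deduce dominance and generic finiteness of $F_{P_4,n}$ from the identity Jacobian and the equal dimensions. One small correction to your closing remark: the non-adjacent distinctness $\lambda_1\ne\lambda_3$, $\lambda_2\ne\lambda_4$ is what makes the inner conjugations $C(a,c)$ and $C(b,d)$ defined, whereas the constants these conjugations produce ($\lambda_3$ versus $\lambda_2$, and $\lambda_4$ versus $\lambda_3$) differ by adjacent distinctness, and $\lambda_1\ne\lambda_4$ is never used.
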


\begin{proof}
Every use of $C$ retains, to first order, the perturbation of its second
argument by Lemma~\ref{lem:first-order}.  Reading the four expressions in
\eqref{eq:seeds} from the inside out gives
\[
 p=\lambda_1+A+O(2),\quad
 q=\lambda_2+B+O(2),\quad
 r=\lambda_3+G+O(2),\quad
 s=\lambda_4+D+O(2).
\]
An endomorphism of a complete filtered algebra with invertible linear part has
a unique formal inverse, obtained recursively by degree.  The same
linearization on $n\times n$ matrix entries is the identity map on an affine
space of dimension $4n^2$.  A rational self-map with a nonsingular Jacobian at
one point is dominant; equal source and target dimensions then imply generic
finiteness.
\end{proof}

We shall use two elementary degree principles.  Degrees are geometric degrees,
computed after extension to an algebraic closure.  Our conventions for
rational maps, function fields, and birationality are standard; see
\cite[Chapter~I, \S4]{Hartshorne1977}.

\begin{lemma}[Simple points bound the generic degree]\label{lem:simple-fiber}
Let $f:X\dashrightarrow Y$ be a dominant generically finite rational map
between irreducible smooth varieties of the same dimension over a
characteristic-zero field.  If one geometric fiber contains $r$ distinct
points in the domain of $f$ at which the Jacobian determinant is nonzero, then
\[
 \deg(f)\ge r.
\]
\end{lemma}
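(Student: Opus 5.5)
Let $y_0$ be the common image of the $r$ given points, and let $x_1,\ldots,x_r$ be the points of the fiber $f^{-1}(y_0)$ that lie in the domain of $f$ and have nonzero Jacobian determinant. The plan is to show that every point $y$ near $y_0$ has at least $r$ preimages, and then to choose such a $y$ general enough that its fiber size equals $\deg f$. We work over an algebraic closure; the hypotheses and the degree are unchanged by this extension. In characteristic zero, with $X$ and $Y$ of equal dimension and $f$ dominant and generically finite, the extension $K(X)/K(Y)$ is finite and separable. Consequently there is a dense open set $V\subseteq Y$ over which $f$ is a finite étale map of degree $\deg f$. In particular, every geometric point of $V$ has exactly $\deg f$ preimages in the domain of $f$.

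Near each $x_i$ we use a local inverse. Since $X$ and $Y$ are smooth of the same dimension and the Jacobian of $f$ at $x_i$ is invertible, $f$ is étale at $x_i$. When $K=\mathbb{C}$ this gives, by the analytic inverse function theorem, disjoint neighborhoods $U_i\ni x_i$ on which $f$ is a biholomorphism onto a neighborhood of $y_0$. Intersecting the images gives an analytic neighborhood $N$ of $y_0$, and each $y\in N$ has at least one preimage in every $U_i$. These preimages are distinct because the $U_i$ are disjoint, so every $y\in N$ has at least $r$ preimages.

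To finish, note that the nonempty Zariski-open set $V$ is dense in the analytic topology, so it meets $N$. Taking $y\in V\cap N$ gives $\deg f=\#f^{-1}(y)\ge r$.

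For a general characteristic-zero field $K$ we replace the analytic step with an algebraic one. By the Lefschetz principle, everything is defined over a finitely generated subfield, which embeds in $\mathbb{C}$, and fiber counts are preserved under this base change, so the complex argument applies. A purely algebraic alternative uses the étale points directly. Let $U\subseteq X$ be the étale locus of $f$ intersected with its domain; it is open and contains all the $x_i$. Each $x_i$ lies on an irreducible component of $U\times_Y U$ consisting of \emph{pairs of points with the same image}, and the étale property makes these components pairwise distinct. One then counts the components of the generic fiber. The main obstacle is making this algebraic version rigorous: distinct étale points of one special fiber must specialize from distinct points of the generic fiber. The cleanest route is the Lefschetz reduction to $\mathbb{C}$ combined with the analytic inverse function theorem, and that is the route I would take.
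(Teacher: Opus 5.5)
Your proposal is correct and takes essentially the same route as the paper: embed a finitely generated field of definition into $\mathbb{C}$, use the holomorphic inverse function theorem to get $r$ disjoint local branches over a common analytic neighborhood of $y_0$, and choose a point of that neighborhood in the Zariski-dense finite \'etale locus, where the fiber has exactly $\deg f$ points. The purely algebraic alternative you sketch at the end is not needed for the argument, and as you note yourself it is not made rigorous, so it can simply be dropped.
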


\begin{proof}
All data involved in the points and the map are defined over a finitely
generated characteristic-zero field, which embeds in $\mathbb C$.  After this
embedding, the complex inverse function theorem gives pairwise disjoint
analytic neighborhoods of the $r$ points and a common analytic neighborhood
of their image on which each neighborhood supplies a distinct local inverse
branch.  A dominant generically finite map restricts, after deleting proper
algebraic subsets of source and target, to a finite \'{e}tale map of degree
$\deg(f)$.  The deleted subsets and their inverse images under the local
branches have empty analytic interior.  A nearby point can therefore be chosen
in the finite \'{e}tale locus and on every local branch.  Its fiber has at least
$r$ points but exactly $\deg(f)$ points.  Hence $\deg(f)\ge r$.
\end{proof}

\begin{lemma}[Free left inverses force matrix degree one]\label{lem:free-degree}
Let $F$ be a free rational self-map in $g$ variables.  Suppose every matrix
evaluation $F_n$ is dominant.  If $F$ has a free rational left inverse, then
$F_n$ is birational for at least one $n\ge2$.
\end{lemma}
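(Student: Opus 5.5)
Let $H$ be a free rational left inverse of $F$, written as a finite tuple of free rational expressions in $g$ variables, so that $H\circ F=\id$ holds in the free skew field. I would proceed in three steps.

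\emph{Step 1: a common matrix domain.} Only finitely many rational expressions appear: the components of $H$, the components of $F$, and every subexpression inverted inside them. The free skew field is the universal field of fractions, and every nonzero element of it is nondegenerate on matrices of sufficiently large size; this is the framework of \cite{Cohn1985,KaliuzhnyiVerbovetskyiVinnikov2014}. Domains of nondegenerate expressions are nonempty Zariski-open sets, and they stay nonempty under passing to larger sizes, for example by block-diagonal embedding. So I would fix one $n\ge2$ at which every expression involved has a nonempty, hence dense, domain. In addition, the composite $H(F(x))$ should be defined on a nonempty open subset of $\Mat_n(K)^g$. To arrange this, I would treat the composite as a single finite family of expressions obtained by substituting $F$ into $H$. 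Each of its inverted subexpressions is a nonzero element of the skew field, so it is nondegenerate at large size, and I would enlarge $n$ accordingly.

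\emph{Step 2: transfer the identity to matrices.} The identity $H\circ F=\id$ in the free skew field means that each component of the composite equals the corresponding variable as an element of the skew field. By the evaluation theory in \cite{KaliuzhnyiVerbovetskyiVinnikov2014}, two rational expressions representing the same element agree wherever both are defined. Hence $H_n\circ F_n=\id$ on a dense open subset of $\Mat_n(K)^g$.

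\emph{Step 3: conclude birationality.} $F_n$ is dominant by hypothesis, and it is a self-map of an irreducible affine space, so the corresponding function fields satisfy $K(Y)\hookrightarrow K(X)$. The rational left inverse $H_n$ shows that every coordinate function of $X$ is the pullback of a rational function on $Y$. Therefore the inclusion of function fields is an equality, and $F_n$ is birational, with $H_n$ as its two-sided rational inverse.

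The main obstacle is Step 1: making sure that the domain of the composite $H(F(x))$ is nonempty at some size, and not just the domains of $H$ and $F$ taken separately. For this I would rely on the fact that every inverted subexpression represents a nonzero element of the skew field and is therefore invertible on a generic matrix tuple of large enough size. Taking $n$ to be the maximum of the finitely many sizes required, together with the block-diagonal monotonicity of domains, gives a single size that works for all of them.
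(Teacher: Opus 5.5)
Your argument is essentially the paper's: find one size $n\ge2$ at which the relevant expressions have nonempty Zariski-open domains, transfer $H\circ F=\id$ to an identity of rational maps on $\Mat_n(K)^g$, and conclude that $F_n^*$ is onto the function field, hence an isomorphism. Two points in Step~1 need fixing.

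First, block-diagonal embedding only shows that a domain which is nonempty at size $m$ is nonempty at the multiples $km$. So it does not justify taking the \emph{maximum} of the required sizes. Take a common multiple instead, enlarged to be at least $2$, as the paper does. The alternative is to prove separately that nonemptiness persists to all sufficiently large sizes, which is a stronger statement than the direct-sum property you cite.

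Second, the ``main obstacle'' you identify is removed by the dominance hypothesis, which you only use in Step~3. This is how the paper proceeds. Once the components of $H$ have a common nonempty open domain $U$ at size $n$, dominance of $F_n$ makes $F_n^{-1}(U)$ a nonempty open set on which the composite expression is defined. No further enlargement of $n$ is needed. Dominance is also the real justification for your claim that the inverted subexpressions $k(F)$ of the composite are nonzero in the free skew field. For an arbitrary substitution, a nonzero $k$ can become zero or undefined; for example, $k=y_1-y_2$ vanishes after substituting $y_1=y_2=x_1$. As written, your proof asserts this nonvanishing without saying where it comes from.
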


\begin{proof}
Choose rational expressions for the finitely many components of the proposed
left inverse.  Each expression has a nonempty matrix domain at some size.
Domains are stable under direct sums, so after taking a common multiple of
these sizes, enlarged if necessary to be at least two, every component has a
nonempty domain at one common size $n$.  At fixed size, the domain of a rational
expression is Zariski open.  Since the matrix tuple space is irreducible, the
finitely many nonempty domains have nonempty intersection.

Thus the left inverse determines a rational map $R_n$ on a nonempty open subset
of the target matrix space.  Dominance of $F_n$ ensures that the composition
$R_n\circ F_n$ is defined on a nonempty open subset.  Equality of the free
rational functions gives
\[
 R_n\circ F_n=\id
\]
as rational maps.  On function fields this becomes
\[
 F_n^*\circ R_n^*=\id.
\]
The injective homomorphism $F_n^*$ is therefore also surjective, so it is an
isomorphism.  Hence $F_n$ is birational.
\end{proof}

\section{A symmetric \texorpdfstring{$2\times2$}{2-by-2} output curve}

Let
\begin{equation}\label{eq:XYZ}
 X=\begin{pmatrix}0&1\\1&0\end{pmatrix},\qquad
 Y=\begin{pmatrix}0&1\\-1&0\end{pmatrix},\qquad
 Z=\begin{pmatrix}1&0\\0&-1\end{pmatrix}.
\end{equation}
They satisfy
\[
 X^2=Z^2=I,\qquad Y^2=-I,
\]
and anticommute pairwise.  Consider the output curve
\begin{equation}\label{eq:curve}
 p=tX,\qquad q=I+tX,\qquad r=2I+tZ,\qquad s=3I+tZ.
\end{equation}
At $t=0$ this is the scalar point $(0,I,2I,3I)$.  By
Proposition~\ref{prop:formal}, it has a unique formal preimage
\begin{equation}\label{eq:preimage}
 a=A,\qquad b=I+B,\qquad c=2I+G,\qquad d=3I+D
\end{equation}
in $\Mat_2(\Q[[t]])^4$.

The symmetry of this branch reduces the elimination to three scalar
coordinates.  Set
\[
 H=\begin{pmatrix}1&1\\1&-1\end{pmatrix},
 \qquad
 \kappa(M)=(HMH^{-1})^{\mathsf T}.
\]
Since $H^{\mathsf T}=H$ and $H^2=2I$, the map $\kappa$ is an anti-involution.
Direct calculation gives
\begin{equation}\label{eq:kappa-basis}
 \kappa(X)=Z,\qquad \kappa(Y)=Y,\qquad \kappa(Z)=X.
\end{equation}

For traceless $2\times2$ matrices $U,V$, define
\[
 \Gamma_\delta(V,U)
 =(\delta I+V-U)U(\delta I+V-U)^{-1}.
\]
This is the centered part of $C(\mu I+V,\lambda I+U)$ when
$\delta=\mu-\lambda$.

\begin{lemma}[Reversal identities]\label{lem:reversal}
For traceless $2\times2$ matrices,
\begin{equation}\label{eq:kappa-gamma}
 \kappa\!\left(\Gamma_\delta(V,U)\right)
 =\Gamma_{-\delta}(\kappa V,\kappa U).
\end{equation}
Moreover, whenever the expressions are defined,
\begin{equation}\label{eq:consistency}
 C(C(x,z),C(x,y))=C(C(z,x),C(z,y)).
\end{equation}
\end{lemma}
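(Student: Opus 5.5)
The statement has two parts. The reversal identity \eqref{eq:kappa-gamma} follows from structural facts about $\kappa$ together with the $2\times2$ Cayley--Hamilton relation. The consistency identity \eqref{eq:consistency} is an identity in the free skew field, and I would reduce it to the symmetry of the pseudo-root at the top of a three-element diamond.

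\emph{Step 1: properties of $\kappa$.} I would first record that $\kappa(M)=(HMH^{-1})^{\mathsf T}$ is linear, fixes scalar matrices, reverses products, and commutes with inversion: $\kappa(M^{-1})=\kappa(M)^{-1}$. It also preserves traces, so traceless matrices go to traceless matrices.

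\emph{Step 2: applying $\kappa$ to $\Gamma_\delta$.} Put $W=V-U$ and $W'=\kappa V-\kappa U=\kappa W$. Applying $\kappa$ to $\Gamma_\delta(V,U)$ and using Step 1 gives
\[
\kappa\bigl(\Gamma_\delta(V,U)\bigr)=(\delta I+W')^{-1}\,\kappa U\,(\delta I+W').
\]
This is a $D$-shaped expression rather than a $\Gamma$-shaped one, so it must be converted.

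\emph{Step 3: converting $D$-shape to $\Gamma$-shape.} Since $W'$ is traceless, Cayley--Hamilton gives $W'^2=\omega I$ with $\omega=-\det W'$. Hence
\[
(\delta I+W')(\delta I-W')=(\delta^2-\omega)I,
\]
and therefore
\[
(\delta I+W')^{-1}=(\delta^2-\omega)^{-1}(\delta I-W'),\qquad \delta I+W'=(\delta^2-\omega)(\delta I-W')^{-1}.
\]
The two scalar factors cancel, so
\[
(\delta I+W')^{-1}\,\kappa U\,(\delta I+W')=(\delta I-W')\,\kappa U\,(\delta I-W')^{-1}.
\]
Finally, $\delta I-W'=-(-\delta I+W')$, and the sign $-1$ cancels between the factor and its inverse. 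The right side is therefore exactly $\Gamma_{-\delta}(\kappa V,\kappa U)$. Invertibility of $\delta I+W'$ is equivalent to invertibility of $\delta I-W'$ (both have determinant $\delta^2-\omega$), so the two sides are defined on the same set.

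\emph{Step 4: plan for the consistency identity.} Put $\alpha=x-z$, $\beta=x-y$, $Q=C(x,z)=\alpha z\alpha^{-1}$ and $P=C(x,y)=\beta y\beta^{-1}$. The left side of \eqref{eq:consistency} is $C(Q,P)=(Q-P)P(Q-P)^{-1}$. My first attempt would be a direct simplification. By Lemma~\ref{lem:diamond-inversion}, $C(z,x)-C(x,z)=x-z$, so $Q=C(z,x)-\alpha$, where $C(z,x)=\alpha x\alpha^{-1}$ (the sign of $\alpha$ cancels in the conjugation). From this one gets $Q=\alpha x\alpha^{-1}-\alpha$, and similarly $P=\beta x\beta^{-1}-\beta$. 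The aim would be to factor $Q-P$ as a product
\[
Q-P=(\text{factor symmetric in } x,z)\cdot(\text{conjugator})
\]
whose conjugation action on $P$ is visibly symmetric under $x\leftrightarrow z$, using the relation $\beta-\alpha=z-y$.

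\emph{Expected obstacle and fallback.} I expect this free-skew-field manipulation to be the main obstacle. If the direct route stalls, the fallback is the pseudo-root interpretation. Both sides of \eqref{eq:consistency} are the pseudo-root $x_{\{i,k\},j}$ at the top of the Boolean lattice $B_3$ on bottom roots $x,y,z$. This pseudo-root is given by a Vandermonde quasideterminant ratio in the sense of \cite{GelfandRetakhWilson2001,GelfandEtAl2005}, which is manifestly symmetric in the two lower indices. Equivalently, it is the unique right root, compatible with the factorization, of the monic cubic with right roots $x,y,z$. Either description gives the identity in the free skew field, and hence on every matrix tuple where both sides are defined.
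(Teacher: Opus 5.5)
Your argument for \eqref{eq:kappa-gamma} is correct and is the paper's own. Cayley--Hamilton makes $(\delta I+W')^{-1}$ a scalar multiple of $\delta I-W'$, and the scalar and the sign cancel in the conjugation.

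The gap is in \eqref{eq:consistency}, which the proposal never actually proves. Your direct route stops at the \emph{aim} of writing $Q-P$ as a symmetric factor times a conjugator. Exhibiting that symmetric factor is the whole content of the identity. The missing idea is elementary. With $A=x-y$ and $B=z-y$ one has $B(A-B)^{-1}A=A(A-B)^{-1}B$ (substitute $A=(A-B)+B$ on both sides). Expanding the definitions of $C$ with this gives
\[
 h:=(C(x,z)-C(x,y))(x-y)=(C(z,x)-C(z,y))(z-y).
\]
In your notation $Q-P=h\beta^{-1}$, so $C(Q,P)=h\beta^{-1}(\beta y\beta^{-1})\beta h^{-1}=hyh^{-1}$. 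The identical computation with $x$ and $z$ exchanged shows that the right side of \eqref{eq:consistency} is also $hyh^{-1}$. This works in any ring in which the displayed inverses exist.

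Your fallback does not close the gap as written. Saying that both sides ``are the pseudo-root $x_{\{i,k\},j}$'' is exactly the path-independence to be proved. The symmetric quasideterminant formula only becomes relevant after two further steps, neither of which appears. First, you must invoke the Gelfand--Retakh--Wilson theorem that the quasideterminant pseudo-roots satisfy the diamond relations. Second, you must use uniqueness of the diamond solution (as in Lemma~\ref{lem:diamond-inversion}) to identify each iterated $C$-expression with that formula. In addition, ``the unique right root compatible with the factorization'' is too imprecise to check.

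Your polynomial intuition can nevertheless be turned into a short self-contained proof. Write coefficients on the left of powers of a central variable $\xi$ and evaluate on the right, and put $\Phi(\xi)=(\xi-C(x,z))(\xi-x)$. Then $\Phi$ has right roots $x$ and $z$. The monic quadratic with these right roots is unique once $x-z$ is invertible, so also $\Phi(\xi)=(\xi-C(z,x))(\xi-z)$. Evaluating each factorization at $y$ gives $\Phi(y)=h$ from the first and $\Phi(y)=(C(z,x)-C(z,y))(z-y)$ from the second. Hence both sides of \eqref{eq:consistency} equal $\Phi(y)\,y\,\Phi(y)^{-1}$, which is manifestly symmetric in $x$ and $z$.
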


\begin{proof}
If $R$ is traceless, then $R^2$ is scalar and
$(\delta I+R)^{-1}$ is a scalar multiple of $\delta I-R$.  Applying the
anti-involution to the defining conjugation for $\Gamma_\delta$ therefore
replaces its conjugator, up to an irrelevant nonzero scalar, by
$-\delta I+\kappa V-\kappa U$.  This proves \eqref{eq:kappa-gamma}.

For \eqref{eq:consistency}, put $A=x-y$ and $B=z-y$.  The elementary identity
\[
 B(A-B)^{-1}A=A(A-B)^{-1}B
\]
follows by substituting $A=(A-B)+B$ into both sides.  Substitution into the
definitions of $C$ then gives
\[
 (C(x,z)-C(x,y))(x-y)
 =(C(z,x)-C(z,y))(z-y).
\]
Both sides of \eqref{eq:consistency} consequently conjugate $y$ by this same
element.
\end{proof}

Let $\widehat F$ be the centered form of $F_{P_4}$, so that
\[
 F_{P_4}(A,I+B,2I+G,3I+D)
 =(P,I+Q,2I+R,3I+S).
\]
On traceless tuples define
\[
 \tau(A,B,G,D)=(\kappa D,\kappa G,\kappa B,\kappa A).
\]
Equations \eqref{eq:kappa-gamma} and \eqref{eq:consistency}, applied from the
inner diamonds outward, give
\begin{equation}\label{eq:equivariance}
 \widehat F\circ\tau=\tau\circ\widehat F.
\end{equation}
The first and fourth coordinates follow directly from
\eqref{eq:kappa-gamma}; equation \eqref{eq:consistency} changes the common
lower vertex in each middle coordinate before \eqref{eq:kappa-gamma} is
applied, interchanging the second and third coordinates.

Every centered matrix in \eqref{eq:preimage} is traceless.  Indeed, $p,q,r,s$
are respectively conjugate to $a,b,c,d$, and the output curve has scalar parts
$0,1,2,3$ and traceless perturbations.  The centered output tuple in
\eqref{eq:curve} is $(tX,tX,tZ,tZ)$, which is fixed by $\tau$.  Formal
uniqueness and \eqref{eq:equivariance} therefore imply
\begin{equation}\label{eq:GDsymmetry}
 G=\kappa(B),\qquad D=\kappa(A).
\end{equation}

Recursive substitution through degree three gives
\begin{equation}\label{eq:formal-jet}
\begin{aligned}
 A&=tX+4t^3Z+O(t^4),\\
 B&=tX-2t^2Y+(2X+4Z)t^3+O(t^4),\\
 G&=tZ-2t^2Y+(4X+2Z)t^3+O(t^4),\\
 D&=tZ+4t^3X+O(t^4).
\end{aligned}
\end{equation}
These identities are also checked directly by the ancillary exact-arithmetic
script.

\section{Exact elimination on the formal branch}

Because $q$ is conjugate to $b$, equation \eqref{eq:curve} gives
\[
 \tr(B)=0,\qquad B^2=t^2I.
\]
The matrices $X,Y,Z$ form a basis of the traceless $2\times2$ matrices, so
there are unique $x,y,z\in\Q[[t]]$ such that
\begin{equation}\label{eq:Bxyz}
 B=xX+yY+zZ,\qquad x^2-y^2+z^2=t^2.
\end{equation}
By \eqref{eq:GDsymmetry},
\[
 G=zX+yY+xZ.
\]
The jet \eqref{eq:formal-jet} gives
\begin{equation}\label{eq:xyz-jet}
 x=t+2t^3+O(t^4),\qquad
 y=-2t^2+O(t^4),\qquad
 z=4t^3+O(t^4).
\end{equation}

The equation $p=C(b,a)=tX$ determines $A$ rationally from $B$.  Since $A$ is
conjugate to $tX$, we have $A^2=t^2I$.  The relation
$p(b-a)=(b-a)a$ then implies
\[
 p(b+p)-(b+p)a=p^2-a^2=0.
\]
Thus, with
\[
 L=I+B+tX,\qquad e=2t^2+2tx-1,
\]
we have $A=L^{-1}(tX)L$.  Here
$\det L=1-2t^2-2tx=-e$, so $L$ is invertible on the formal branch.  Calculation
in the Pauli basis gives
\begin{equation}\label{eq:Aexplicit}
\begin{aligned}
 A={}&\frac{t(2x(t+x)-1)}{e}X
 +\frac{2t((t+x)y+z)}{e}Y\\
 &+\frac{2t((t+x)z+y)}{e}Z.
\end{aligned}
\end{equation}

Put
\[
 T=C(a,b),\qquad U=C(a,c),\qquad h=(U-T)(a-b).
\]
Since $T=(a-b)b(a-b)^{-1}$ and $q=(U-T)T(U-T)^{-1}$,
\[
 q=hbh^{-1}.
\]
Using $q=I+tX$ and $b=I+B$, this is equivalent to
\begin{equation}\label{eq:Ezero}
 E:=tXh-hB=0.
\end{equation}

Let $\mathcal A=\Q(t,x,z)(y)$ be the quadratic field determined by
\[
 y^2=x^2+z^2-t^2.
\]
The radicand has simple zeros when regarded as a rational function of $x$ over
$\Q(t,z)$, so it is not a square in $\Q(t,x,z)$.
Substitute \eqref{eq:Bxyz}, \eqref{eq:Aexplicit}, and
$G=zX+yY+xZ$ into \eqref{eq:Ezero}, calculating with
\[
 XY=-Z,\qquad XZ=-Y,\qquad YZ=-X
\]
and anticommutation.  Write the scalar and $X$-coordinates of $E$ as
\[
 E_0=a_0+b_0y,\qquad E_X=a_1+b_1y,
\]
where $a_i,b_i\in\Q(t,x,z)$.  On the branch \eqref{eq:xyz-jet}, exact
expansion gives
\begin{equation}\label{eq:b-leading}
 b_0=-t^2+O(t^4),
 \qquad
 b_1=t+O(t^3).
\end{equation}
In particular, both are nonzero in $\Q((t))$.  Therefore
\[
 y=\rho_0:=-\frac{a_0}{b_0}
 =\rho_1:=-\frac{a_1}{b_1}
\]
on the formal branch.

Cancel each $\rho_i$ in $\Q(t,x,z)$ before clearing denominators.  Let
$S(t,x,z)$ be the unique irreducible factor vanishing at $(0,0,0)$ in the
numerator of $\rho_0-\rho_1$, and let $N(t,x,z)$ be the corresponding factor
in the numerator of
\[
 \rho_0^2-(x^2+z^2-t^2).
\]
All other irreducible factors in these two numerators have nonzero value at the
origin and hence are units after substitution of the formal branch.  Thus
\begin{equation}\label{eq:SNzero}
 S(t,x(t),z(t))=N(t,x(t),z(t))=0.
\end{equation}

\begin{lemma}[Exact elimination]\label{lem:resultant}
The factors just defined have $x$-degrees $4$ and $8$, respectively, and
\begin{align}
 \Res_x(S,N)
 ={}&16t^{12}z(z-t)^2(t-1)^4(t+1)^4
 (2t-1)^3(2t+1)^3 \notag\\
 &\cdot(3t^2+tz-2)^2
 \left(8t^4+4t^3z+4t^2z^2-8t^2-4tz+1\right)^2
 P(t,z),
 \label{eq:resultant}
\end{align}
where
\[
 P(t,z)=A_3(t)z^3+A_2(t)z^2+A_1(t)z+A_0(t)
\]
and
\begin{align*}
 A_3(t)&=256t^{12}-1472t^{10}+2096t^8-676t^6+160t^4-25t^2,\\
 A_2(t)&=256t^{11}-1984t^9+1808t^7+28t^5+86t^3+20t,\\
 A_1(t)&=-2112t^{10}+1552t^8+1860t^6-257t^4+140t^2-4,\\
 A_0(t)&=-2880t^9+672t^7-564t^5+16t^3.
\end{align*}
\end{lemma}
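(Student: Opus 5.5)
This lemma is a certified exact computation, so the proof will consist of carrying out the construction of $S$ and $N$ explicitly over $\Q$ and then computing and factoring a resultant. Everything is done in exact rational arithmetic, as in the ancillary script.

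\textbf{Step 1: assemble $E$ over $\mathcal A$.} Working over $\mathcal A=\Q(t,x,z)(y)$ in the Pauli basis, I will form, in order:
\begin{itemize}
\item $a=A$ from \eqref{eq:Aexplicit}, together with $b=I+B$ and $c=2I+G$, where $B=xX+yY+zZ$ and $G=zX+yY+xZ$;
\item $T=C(a,b)$ and $U=C(a,c)$;
\item $h=(U-T)(a-b)$ and finally $E=tXh-hB$.
\end{itemize}
Every inverse that occurs is of the form $(\alpha I+R)^{-1}$ with $R$ traceless. Since $R^2$ is then scalar, the inverse equals $(\alpha I-R)/(\alpha^2-R^2)$, so no matrix inversion beyond a scalar denominator is ever needed. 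After each product I will reduce $y^2$ to $x^2+z^2-t^2$. This puts every coefficient in the normal form $\alpha+\beta y$ with $\alpha,\beta\in\Q(t,x,z)$, which is legitimate because that radicand is not a square. From the scalar and $X$ coordinates of $E$ I read off $a_0,b_0,a_1,b_1$.

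\textbf{Step 2: check the branch conditions and build $S$ and $N$.} I will substitute the jet \eqref{eq:xyz-jet} to confirm \eqref{eq:b-leading}; this needs jets only through order $t^3$, so the truncation is harmless. I then form $\rho_0=-a_0/b_0$ and $\rho_1=-a_1/b_1$, reduce each to lowest terms, and take the numerators of $\rho_0-\rho_1$ and of $\rho_0^2-(x^2+z^2-t^2)$. Both numerators are factored over $\Q$. In each one I check that exactly one irreducible factor vanishes at the origin, and I call these factors $S$ and $N$; checking that the others do not vanish there is just evaluating each factor at $(0,0,0)$. At this point I read off the $x$-degrees, which should be $4$ and $8$.

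\textbf{Step 3: compute and certify the resultant.} I will compute $\Res_x(S,N)$ as the determinant of the $12\times12$ Sylvester matrix with entries in $\Q[t,z]$, and factor it over $\Q$. The certificate consists of three checks:
\begin{itemize}
\item multiplying out the right side of \eqref{eq:resultant} and comparing it with the determinant as polynomials;
\item verifying that each listed factor is irreducible, or at least that $P$ is irreducible, which is what matters downstream;
\item confirming that $P$ has $z$-degree $3$ with the stated coefficients $A_i(t)$.
\end{itemize}

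\textbf{Main obstacle.} I expect the hard part to be expression swell in Steps 1--2. The rational functions in $E$ have large numerators, and the Sylvester determinant with bivariate polynomial entries of high degree is costly. To control this, I will cancel common factors at each stage, as the statement instructs, before clearing denominators. If the direct determinant is too slow, I will fall back on evaluation and interpolation: specialize $t$ to many rational values, compute univariate-in-$z$ resultants, and interpolate, with degree bounds coming from the Sylvester structure. The final identity is then confirmed by an exact polynomial comparison, so the result remains a rigorous exact certificate.
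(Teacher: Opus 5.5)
Your proposal is correct and matches the paper's proof. The paper's proof is the same exact symbolic computation in $\mathcal A$ carried out by the ancillary script: build $A,B,G,T,U,h,E$, extract $S,N$ by factorization over $\Q$ and selection of the factor vanishing at the origin, check \eqref{eq:b-leading}, and verify \eqref{eq:resultant} by expanding the difference of the two sides. One detail is worth making explicit: $S$ and $N$ are only defined up to nonzero rational scalars, so to reproduce the constant $16$ you should take them primitive in $\mathbb{Z}[t,x,z]$, after which any sign choice is harmless because $\Res_x(c_SS,c_NN)=c_S^{8}c_N^{4}\Res_x(S,N)$.
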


\begin{proof}
This is a finite exact calculation in the quadratic algebra $\mathcal A$.
The ancillary script constructs $A,B,G,T,U,h,E$, extracts $S,N$ by exact
factorization over $\Q$, and verifies \eqref{eq:resultant} by expanding the
difference between its two sides.  It also verifies the nonzero leading terms
in \eqref{eq:b-leading}.  Every operation is symbolic over $\Q$; no numerical
approximation enters the calculation.
\end{proof}

From \eqref{eq:SNzero}, the left side of \eqref{eq:resultant} vanishes after
substitution of the formal branch.  In the integral domain $\Q[[t]]$, the
factors $t$, $z$, and $z-t$ are nonzero by \eqref{eq:xyz-jet}; every other
displayed factor except $P$ has nonzero constant term.  Hence
\begin{equation}\label{eq:Pzero}
 P(t,z(t))=0.
\end{equation}

\begin{lemma}[Irreducibility of the cubic]\label{lem:irreducible}
The polynomial $P(t,z)$ is irreducible in $\Q(t)[z]$.
\end{lemma}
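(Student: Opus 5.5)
Since $P$ is a cubic in $z$ over $\Q(t)$, it is reducible over $\Q(t)$ if and only if it has a root in $\Q(t)$. By Gauss's lemma, reducibility over $\Q(t)$ is equivalent to a factorization in $\Q[t][z]$ into factors of positive $z$-degree, after any content in $\Q[t]$ has been removed. The plan is therefore to exclude a linear factor $\alpha(t)z+\beta(t)$ with $\alpha,\beta\in\Q[t]$ coprime. The cleanest way to do this is to specialize $t$.

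\textbf{Step 1: specialization.} Choose a rational value $t_0$ with $A_3(t_0)\neq0$. Then $P(t_0,z)$ is still a cubic in $z$. If $P$ had a factor $\alpha z+\beta$ in $\Q[t][z]$, then $\alpha(t_0)z+\beta(t_0)$ would divide $P(t_0,z)$ in $\Q[z]$. Since $\alpha(t_0)\ne0$, this would give a rational root of the specialized cubic. It therefore suffices to find one $t_0$ for which $P(t_0,z)$ is an irreducible cubic over $\Q$. One must also note that $P$ has no nonconstant content in $\Q[t]$. A content factor would be a common factor of $A_0,\dots,A_3$. Checking this is routine: $A_1(0)=-4\neq0$, while $A_0(0)=0$, so one verifies $\gcd(A_0,A_1,A_2,A_3)=1$, for instance through $\gcd(A_1,A_3)$. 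In any case, content does not affect irreducibility over the field $\Q(t)$.

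\textbf{Step 2: choosing $t_0$.} Natural candidates are $t_0=1$ and $t_0=2$, or a value such as $t_0=1/2$ chosen to keep the numbers manageable. For example, at $t_0=1$:
\[
A_3(1)=256-1472+2096-676+160-25=339,\qquad A_2(1)=256-1984+1808+28+86+20=214,
\]
\[
A_1(1)=-2112+1552+1860-257+140-4=1179,\qquad A_0(1)=-2880+672-564+16=-2756.
\]
Irreducibility over $\Q$ is then decided by the rational root test. Every rational root must have the form $\pm u/v$ with $u\mid 2756$ and $v\mid 339$, and each candidate can be checked. A faster method is to reduce modulo a small prime $\ell$ that does not divide $339$ and to check that the cubic has no root in $\mathbb F_\ell$. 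If $t_0=1$ produces a rational root, I would move on to the next value of $t_0$. Failure is not expected, since by Hilbert irreducibility most specializations of an irreducible polynomial remain irreducible.

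\textbf{Main obstacle.} The only nontrivial point is finding a specialization and prime $\ell$ for which the reduced cubic is visibly rootless. This is a finite check. Following the paper's convention, I would also record it in the ancillary script as the irreducibility specialization mentioned in the introduction. The specialization argument is needed because a factorization over $\Q(t)$ might in principle disappear after specialization. The argument only runs in the safe direction: irreducibility of one specialization with nonvanishing leading coefficient implies irreducibility over $\Q(t)$.
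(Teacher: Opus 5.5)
Your framework is the same as the paper's: Gauss's lemma reduces reducibility over $\Q(t)$ to a factorization in $\Q[t][z]$, and one specialization $t_0$ with $A_3(t_0)\neq0$ at which $P(t_0,z)$ is irreducible over $\Q$ finishes the proof. The gap is that the lemma's entire content is that finite check, and your proposal never performs it. Moreover, two of your three named candidates fail. At $t_0=1$ your coefficients are correct, but
\[
 339z^3+214z^2+1179z-2756=(3z-4)(113z^2+222z+689).
\]
So $P(1,z)$ has the rational root $z=4/3$; indeed $339\cdot64+214\cdot48+1179\cdot36=74412=2756\cdot27$. No prime $\ell$ can certify this specialization, so the mod-$\ell$ search at $t_0=1$ is a dead end. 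Your other suggestion, $t_0=1/2$, is excluded by your own Step~1, since $A_3(1/2)=(1-23+131-169+160-100)/16=0$. The appeal to Hilbert irreducibility also cannot guarantee that the search terminates, because it presupposes that $P$ is irreducible, which is what is to be proved.

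The missing certificate is what the paper supplies, using your third candidate $t_0=2$. There $A_3(2)=37020\neq0$ and $P(2,z)=4(9255z^3-64618z^2-412473z-351616)$. Reduce modulo $7$, which does not divide $9255$. The cubic becomes $z^3-z^2+2z+1$, whose values on $\mathbb F_7$ are $1,3,2,4,1,6,4$. It therefore has no rational root, and $P$ is irreducible over $\Q(t)$. With this explicit specialization inserted, your argument is complete. As you note, the primitivity check is not even needed for irreducibility over the field $\Q(t)$.
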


\begin{proof}
The coefficient polynomials $A_3,A_2,A_1,A_0$ have greatest common divisor
one, so $P$ is primitive in $\Q[t][z]$.  At $t=2$,
\[
 P(2,z)=4(9255z^3-64618z^2-412473z-351616).
\]
The cubic in parentheses is primitive.  Modulo $7$ it becomes
\[
 z^3-z^2+2z+1,
\]
whose values at $0,1,\ldots,6$ are $1,3,2,4,1,6,4$.  It has no root in
$\mathbb F_7$ and is therefore irreducible there, hence irreducible over
$\Q$.  The leading coefficient $A_3(2)$ is nonzero.  If $P$ factored over
$\Q(t)$, Gauss's lemma would give a factorization in $\Q[t,z]$; specialization
at $t=2$ would preserve the positive $z$-degrees of both factors and would
factor $P(2,z)$, a contradiction.
\end{proof}

Combining \eqref{eq:Pzero} with Lemma~\ref{lem:irreducible} gives
\begin{equation}\label{eq:z-degree}
 [\Q(t,z(t)):\Q(t)]=3.
\end{equation}

\section{The matrix-degree obstruction}

Let $F_n$ be the rational map induced by $F_{P_4}$ on four $n\times n$
matrices.  By Proposition~\ref{prop:formal}, $F_n$ is dominant and generically
finite.  We now convert \eqref{eq:z-degree} into a lower bound for its degree.

\begin{proposition}\label{prop:all-degrees}
For every $n\ge2$,
\[
 \deg(F_n)\ge3.
\]
\end{proposition}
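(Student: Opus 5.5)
The plan is to apply Lemma~\ref{lem:simple-fiber} to $F_n$: we will exhibit, over an algebraic closure of $\Q(t)$ (or at a suitable specialization of $t$), a fiber of $F_n$ containing three distinct points at which the Jacobian is nonzero. We begin with $n=2$. The formal preimage \eqref{eq:preimage} is a point of $\Mat_2(\overline{\Q((t))})^4$ lying over the output \eqref{eq:curve}. Its coordinates $x,y,z$ satisfy $S=N=0$, $y=\rho_0(t,x,z)$, and $P(t,z)=0$. By \eqref{eq:z-degree}, the entry $z(t)$ has three distinct $\Q(t)$-conjugates $z_1,z_2,z_3$ in an algebraic closure of $\Q(t)$; distinctness holds because $P$ is irreducible in characteristic zero, hence separable.

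Next we take a Galois automorphism $\sigma$ of $\overline{\Q(t)}$ over $\Q(t)$ with $\sigma(z)=z_j$. The whole preimage lives in the field $\Q(t,x,z,y)$, which is algebraic over $\Q(t)$. Indeed, $x$ is algebraic over $\Q(t,z)$ since $S(t,x,z)=0$ with $S$ of positive $x$-degree; the coordinate $y$ is then $\rho_0$; and $A,G,D$ are determined rationally by \eqref{eq:Aexplicit} and \eqref{eq:GDsymmetry}. The equation $F_2(a,b,c,d)=(p,q,r,s)$ is a polynomial identity, after clearing denominators, with coefficients in $\Q(t)$, and the output curve is defined over $\Q(t)$. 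Applying $\sigma$ entrywise therefore gives another preimage of the same output, and its $z$-coordinate is $z_j$. This yields three preimages $\sigma_j(a,b,c,d)$, $j=1,2,3$. They are pairwise distinct because their $B$-components have different $Z$-coefficients. The Jacobian determinant of $F_2$ at the original preimage is a power series whose constant term is $1$ by Proposition~\ref{prop:formal}, so it is nonzero. Conjugating by $\sigma$ preserves nonvanishing, and domain membership (nonvanishing of the relevant determinants) transports in the same way. We then work over the field $\Q(t)$, or rather over the algebraically closed field $\overline{\Q(t)}$, viewing $F_2$ as a variety map defined over $\Q$ and base-changed. Lemma~\ref{lem:simple-fiber} then gives $\deg F_2\ge 3$, since degree is invariant under base field extension. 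If one prefers a geometric point over $\overline\Q$, the alternative is to specialize $t$ to a generic rational value, avoiding finitely many bad values where discriminants or Jacobians vanish.

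The main obstacle is passing from $n=2$ to all $n\ge2$. For $n=2m$ we take block-diagonal direct sums. For odd $n$, a direct sum of the $2\times2$ point with scalar blocks is also needed, and the choice must keep the Jacobian nonzero. Concretely, we will choose the point $\bigl(a\oplus\lambda_1 I_{n-2},\, b\oplus\lambda_2 I_{n-2},\ldots\bigr)$ with distinct scalars $\lambda=(0,1,2,3)$ matching the scalar parts, but shifted slightly so that the Jacobian there is near identity. Simpler still, we use $\Lambda$ blocks given by generic diagonal scalars in the formal neighborhood. Since $F$ is a free map, it respects direct sums, so the three conjugate $2\times2$ preimages, each extended by the same fixed $(n-2)$-block preimage of a fixed scalar output, are three distinct preimages of one output. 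The Jacobian at a direct sum is block-structured. The diagonal blocks are the Jacobians of $F_2$ and $F_{n-2}$, which are nonzero (the scalar point has identity Jacobian by Proposition~\ref{prop:formal}). The off-diagonal blocks are the linearization on mixed entries; we must check that these are invertible, which is the delicate point. We expect to handle this by taking the scalar block $(\mu_1,\mu_2,\mu_3,\mu_4)$ with $\mu_i$ transcendental or generic, so that the off-diagonal derivative is a Sylvester-type operator whose determinant is a nonzero polynomial in the $\mu_i$; it is nonzero for $\mu_i$ far from the spectra, where it is close to the identity. Lemma~\ref{lem:simple-fiber} then gives $\deg F_n\ge3$.
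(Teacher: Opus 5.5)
Your $n=2$ argument is sound and is essentially the paper's: Galois conjugates of the formal branch, distinguished by their $z$-coordinates, with denominators and the Jacobian determinant $1+O(t)$ staying nonzero under conjugation. Your algebraicity argument via $S$, $\rho_0$, \eqref{eq:Aexplicit} and \eqref{eq:GDsymmetry} is a fine substitute for the implicit-function step.

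The gap is the passage to $n>2$. For even $n$ you assert that block-diagonal sums work but never check that the resulting points are simple. (Repeating one conjugate point would work, since the mixed blocks are then copies of $dF_2$ at that point, but you do not say this.) For general $n$ you leave the invertibility of the mixed-entry Jacobian blocks open. You propose to obtain it from appended scalars $\mu_i$ ``far from the spectra, where it is close to the identity,'' and that mechanism is false. Take $p=C(b,a)$, with the $2\times2$ block at its $t=0$ value $(\lambda_a,\lambda_b)$ and appended scalars $\mu_a,\mu_b$. A direct block-triangular computation shows that the mixed $(1,2)$ block of $dp$ is
\[
 \frac{(\lambda_b-\mu_a)\,h_a+(\mu_a-\lambda_a)\,h_b}{\mu_b-\mu_a},
\]
where $h_a,h_b$ are the mixed perturbations of $a,b$. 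For $\mu_a=M$, $\mu_b=2M$ with $M$ large, this is close to $h_b-h_a$, not $h_a$. So large or generic $\mu$ gives no control. The determinant is indeed a nonzero function of $\mu$, but only because of its value at $\mu=\lambda$, which your argument never uses.

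The missing step is the choice you wrote first and then abandoned: append exactly the centers, with no shift,
\[
 A\oplus 0I_{n-2},\quad (I+B)\oplus I_{n-2},\quad (2I+G)\oplus 2I_{n-2},\quad (3I+D)\oplus 3I_{n-2}.
\]
The displayed formula then reduces to $h_a$. More generally, at $t=0$ the whole amplified input is the scalar tuple $(0I_n,I_n,2I_n,3I_n)$. There Proposition~\ref{prop:formal} gives the identity Jacobian on all $4n^2$ entries, mixed ones included. So the $n\times n$ Jacobian determinant on the original amplified branch is $1+O(t)$, a nonzero element of the field generated by the branch entries, and its Galois conjugates are nonzero. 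Since $F_{P_4}$ respects direct sums and fixes this scalar block, the three conjugate branches give three distinct simple points over one block-diagonal output for every $n\ge2$ at once, with no parity split. This is exactly how the paper concludes.
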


\begin{proof}
First let $n=2$.  Let $L$ be the field over $\Q(t)$ generated by all entries
of the formal preimage \eqref{eq:preimage}.  The equations defining the inverse
branch are rational algebraic equations whose denominators have nonzero
constant terms.  Their Jacobian with respect to the input entries is the
identity at $t=0$.  The algebraic implicit-function theorem therefore shows
that every entry of the branch is algebraic over $\Q(t)$; equivalently,
$L/\Q(t)$ is finite.  Since $z(t)\in L$, equation \eqref{eq:z-degree} gives
\[
 [L:\Q(t)]\ge3.
\]

The extension is separable.  Each $\Q(t)$-embedding of $L$ into an algebraic
closure sends the matrix entries of the branch to another preimage of the same
output curve.  The resulting points are distinct because those entries
generate $L$.  Every denominator that occurs in $F_2$ is a nonzero element of
$L$, so it remains nonzero under each embedding.  The Jacobian determinant on
the original branch is $1+O(t)$ and hence is nonzero in $L$; its images under
the embeddings are also nonzero.  Thus the fiber over \eqref{eq:curve}
contains at least three distinct simple points.  Lemma~\ref{lem:simple-fiber}
gives $\deg(F_2)\ge3$.

For $n>2$, append to every input matrix the scalar block corresponding to its
center:
\[
 A\oplus0I_{n-2},\quad
 (I+B)\oplus I_{n-2},\quad
 (2I+G)\oplus2I_{n-2},\quad
 (3I+D)\oplus3I_{n-2}.
\]
The map $C$ respects direct sums, so the conjugate $2\times2$ preimages produce
at least three distinct preimages of one $n\times n$ block-diagonal output
curve.  On the original amplified branch the input tends at $t=0$ to the
scalar tuple $(0I_n,I_n,2I_n,3I_n)$, so its $n\times n$ Jacobian determinant is
$1+O(t)$ by Proposition~\ref{prop:formal}.  This determinant is a nonzero
element of $L$, and its conjugates are therefore nonzero as well.  The three
amplified points are simple, and Lemma~\ref{lem:simple-fiber} gives
$\deg(F_n)\ge3$.
\end{proof}

The maps $F_n$ are defined over $\Q$, and the degree in
Proposition~\ref{prop:all-degrees} is geometric.  It is therefore unchanged by
extension to any characteristic-zero constant field.

\begin{proof}[Proof of Theorem~\ref{thm:main}]
If $a,b,c,d$ were free rational functions of $p,q,r,s$, those expressions
would define a free rational left inverse of $F_{P_4}$.  By
Lemma~\ref{lem:free-degree}, $F_n$ would then be birational for some $n\ge2$,
so $\deg(F_n)=1$.  This contradicts Proposition~\ref{prop:all-degrees}.
\end{proof}

\begin{corollary}
The adjacent partners $T=C(a,b)$ and $W=C(d,c)$ are not simultaneously free
rational functions of $p,q,r,s$.  The same holds for the second-layer partners
$M,N$.
\end{corollary}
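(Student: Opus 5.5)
The plan is to reduce both assertions to Theorem~\ref{thm:main} through the reductions already set up in Section~2. For the adjacent partners, I will argue by contradiction. Suppose $T$ and $W$ were both free rational functions of $p,q,r,s$. Then Proposition~\ref{prop:partner} recovers the bottom pseudo-roots by the explicit formulas $a=D(T,p)$, $b=D(p,T)$, $d=D(W,s)$ and $c=D(s,W)$. This exhibits $a,b,c,d$ as elements of $K\langle p,q,r,s\rangle_{\rat}$, which contradicts the strict inclusion of Theorem~\ref{thm:main}.

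One point needs checking: the displayed inverses must make sense in the free skew field. In $D(T,p)$ the inverted element is $T-p=u=b-a$, which is nonzero in $K\langle a,b,c,d\rangle_{\rat}$ and therefore invertible there. The same holds for $W-s=v=c-d$. Since $K\langle p,q,r,s\rangle_{\rat}$ is the rational closure inside the free skew field, it is closed under inverses of its nonzero elements. The compositions therefore stay inside it, and no domain issue arises.

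For the second-layer partners I will use the diamonds $T=D(M,q)$ and $W=D(N,r)$ recorded after Proposition~\ref{prop:partner}. If $M$ and $N$ were free rational in $p,q,r,s$, then $T$ and $W$ would be as well, and the first part gives the contradiction. Here the inverted elements are $M-q$ and $N-r$. By Lemma~\ref{lem:diamond-inversion} applied to the relevant upper diamonds, these equal differences of lower pseudo-roots such as $C(a,c)-C(a,b)$, which are nonzero, for instance by Lemma~\ref{lem:first-order} at a generic scalar point.

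The only step needing care is the nonvanishing of these conjugators, which is what makes the $D$-formulas legitimate in the skew field. I expect it to follow immediately from the first-order expansions, since at distinct central scalars each difference has a nonzero constant term $\lambda_j-\lambda_i$.
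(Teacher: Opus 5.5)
Your proposal is correct and is essentially the paper's proof, which just invokes Proposition~\ref{prop:partner} and the second-layer diamonds $T=D(M,q)$, $W=D(N,r)$ to reduce to Theorem~\ref{thm:main}. The paper leaves implicit that the conjugators $b-a$, $c-d$, $M-q=C(a,c)-C(a,b)$ and $N-r=C(d,b)-C(d,c)$ are nonzero, so that the $D$-formulas stay inside $K\langle p,q,r,s\rangle_{\rat}$; your check of this point is correct.
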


\begin{proof}
Apply Proposition~\ref{prop:partner} and the equivalent second-layer recovery
described after it.
\end{proof}

\section{Extension from \texorpdfstring{$P_4$}{P4} to non-cographs}

We finish by replacing a potentially invalid specialization of a hypothetical
rational inverse with a specialization of the explicit algebraic branch.  The
latter has all denominators controlled at its scalar base point.

For a graph $G$ on $[m]$, let $F_G$ denote the rational map from the $m$ bottom
pseudo-roots to the neighborhood seed $Y_G$.  Its evaluation on $n\times n$
matrices is denoted $F_{G,n}$.

\begin{lemma}[Central directions]\label{lem:central}
If $\lambda$ is central, then
\[
 C(\lambda,x)=x,
 \qquad
 C(x,\lambda)=\lambda
\]
whenever the indicated inverses exist.  Consequently, after bottom
pseudo-roots outside a vertex set $V$ are assigned distinct central scalars,
pseudo-roots in directions belonging to $V$ reduce to those for the induced
graph $G[V]$, while pseudo-roots in outside directions remain their assigned
scalars.
\end{lemma}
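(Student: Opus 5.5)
The two identities come straight from the definition \eqref{eq:CD}. For the reduction statement I will argue by induction along the recursive definition of pseudo-roots on $B_m$.

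\textbf{The two identities.} If $\lambda$ is central, then $\lambda-x$ commutes with $x$. Hence
\[
 C(\lambda,x)=(\lambda-x)x(\lambda-x)^{-1}=x
\]
whenever $\lambda-x$ is invertible. Similarly, $x-\lambda$ commutes with the central element $\lambda$, so
\[
 C(x,\lambda)=(x-\lambda)\lambda(x-\lambda)^{-1}=\lambda .
\]

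\textbf{Setting up the induction.} Every pseudo-root $x_{A,i}$ with $A\neq\varnothing$ is produced by a diamond. If $j\in A$, then
\[
 x_{A,i}=C\bigl(x_{A\setminus\{j\},j},\,x_{A\setminus\{j\},i}\bigr),
\]
using the convention of \eqref{eq:seeds}, in which the lower partner in direction $j$ is the first argument. Independence of the choice of $j$ is the consistency identity \eqref{eq:consistency}. Assign distinct central scalars $\lambda_k$ to the bottom pseudo-roots $x_{\varnothing,k}$ for $k\notin V$. I will prove, by induction on $|A|$, the following two claims.
\begin{itemize}
\item[(i)] If $k\notin V$ and $k\notin A$, then $x_{A,k}=\lambda_k$.
\item[(ii)] If $i\in V$ and $i\notin A$, then $x_{A,i}=x^{G[V]}_{A\cap V,i}$, the pseudo-root of the Boolean lattice on $V$ built from the bottom pseudo-roots in $V$.
\end{itemize}
Both claims hold trivially when $A=\varnothing$.

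\textbf{Inductive step.} Fix $A$ and a target direction $i\notin A$, and pick any $j\in A$.
\begin{itemize}
\item If $j\notin V$, then by (i) the partner $x_{A\setminus\{j\},j}=\lambda_j$ is central. The first identity gives $x_{A,i}=x_{A\setminus\{j\},i}$, so removing an outside index changes nothing. This settles both claims whenever $A\not\subseteq V$: in that case we choose $j\in A\setminus V$ and reduce to $A\setminus\{j\}$.
\item If $A\subseteq V$ and $i\in V$, the diamond is exactly the diamond of the lattice on $V$, so claim (ii) holds by induction.
\item If $A\subseteq V$ and $i=k\notin V$, then $x_{A\setminus\{j\},k}=\lambda_k$ is central by induction. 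The second identity gives $x_{A,k}=\lambda_k$, which is claim (i).
\end{itemize}

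\textbf{Applying the result to neighborhood seeds.} For $i\in V$ we have $N_G(i)\cap V=N_{G[V]}(i)$. Claim (ii) therefore sends the seed $x_{N_G(i),i}$ to the $G[V]$-seed. Claim (i) shows that outside seeds remain equal to their assigned scalars.

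\textbf{Main obstacle: invertibility.} The identities require the inverses to exist, for example $\lambda_k-x$. In the reduced diamond with $A\subseteq V$, the difference $x_{A\setminus\{j\},j}-\lambda_k$ must be invertible. I expect to get this from genericity, by choosing the scalars $\lambda_k$ distinct from and generic relative to the $V$-data. Distinctness of the outside scalars covers the remaining purely outside diamonds. I also need to check that the free-field value is well defined under this specialization. This follows because each expression is evaluated inside its own domain, and the domain is nonempty near scalar points by Lemma~\ref{lem:first-order}.
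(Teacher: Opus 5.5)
Your proof is correct and follows the paper's route: the identities come from $\lambda-x$ commuting with both $x$ and $\lambda$, and the reduction is an induction along diamond moves, which you make explicit through claims (i) and (ii). Your case split in fact works for any $j\in A$ once ``$A\subseteq V$'' is replaced by ``$j\in V$'', matching the paper's ``any sequence of diamond moves''. The genericity argument goes beyond what the statement requires, since the statement assumes the inverses exist; in the paper's application they do, because the outside centers differ from $0,1,2,3$ and so every denominator has an invertible scalar term at $t=0$.
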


\begin{proof}
Both identities follow because $\lambda-x$ commutes with $x$ and with
$\lambda$.  The consequence follows inductively along any sequence of diamond
moves constructing the relevant pseudo-root.
\end{proof}

\begin{lemma}\label{lem:graph-jacobian}
At a bottom tuple of pairwise distinct central scalars, the Jacobian of
$F_{G,n}$ is the identity for every graph $G$ and every matrix size $n$.
Thus $F_{G,n}$ is dominant and generically finite.
\end{lemma}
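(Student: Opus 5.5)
The plan is to generalize the proof of Proposition~\ref{prop:formal} from $P_4$ to an arbitrary graph $G$ on $[m]$. Fix pairwise distinct central scalars $\lambda_1,\ldots,\lambda_m$ and write each bottom pseudo-root as $x_{\varnothing,i}=\lambda_i+X_i$, where $X_i$ is an $n\times n$ matrix perturbation (or a free variable of positive degree). The central claim is that every pseudo-root $x_{A,i}$ satisfies
\[
 x_{A,i}=\lambda_i+X_i+O(2),
\]
and that all inverses occurring in its construction exist near the scalar point.

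I would prove this by induction on $|A|$. The case $A=\varnothing$ is the definition. For the inductive step, pick $j\in A$ and realize $x_{A,i}$ as an upper pseudo-root of the diamond with bottom vertex $A'=A\setminus\{j\}$:
\[
 x_{A,i}=C\bigl(x_{A',j},x_{A',i}\bigr).
\]
The argument order follows the convention used in \eqref{eq:seeds}, where $C(b,a)=x_{\{2\},1}$. By induction the two arguments are $\lambda_j+X_j+O(2)$ and $\lambda_i+X_i+O(2)$. Their difference is
\[
 (\lambda_j-\lambda_i)+(\text{positive-degree terms}),
\]
which is invertible because $\lambda_j\ne\lambda_i$, so the expression is defined near the scalar point. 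Lemma~\ref{lem:first-order} then gives $x_{A,i}=\lambda_i+X_i+O(2)$. Note that the $O(2)$ terms in the arguments only contribute in degree at least two. This is because the first-order formula depends only on the constant and linear parts of the arguments, as the expansion in the proof of Lemma~\ref{lem:first-order} shows.

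Applying this to the seed $x_{N_G(i),i}$ gives a linear part equal to $X_i$. Hence the linearization of $F_{G,n}$ on the $mn^2$ matrix entries is the identity, and in particular the Jacobian is the identity. Dominance and generic finiteness then follow exactly as in the last step of Proposition~\ref{prop:formal}. A rational self-map of affine space $\mathbb A^{mn^2}$ that is regular with nonsingular Jacobian at a point has a dense image by the inverse function theorem, so it is dominant. Since source and target have equal dimension, the generic fiber has dimension zero.

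The main obstacle is a subtlety in the induction rather than any computation. The pseudo-root $x_{A,i}$ must be well defined independently of the chosen construction path, and every intermediate inverse must exist on a common open neighborhood of the scalar point. I would handle this by noting that each particular construction sequence is a fixed rational expression whose inverses are all invertible at the scalar point, since their constant parts are the nonzero scalars $\lambda_j-\lambda_i$. The linearization argument above applies to whichever construction is used, so no path-independence result is needed to compute the Jacobian.
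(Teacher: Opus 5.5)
Your proposal is correct and is essentially the paper's argument. The paper's proof is the one-line version of your induction: by Lemma~\ref{lem:first-order}, each application of $C$ preserves the first-order perturbation of its second argument, so the seed $x_{N_G(i),i}$ has linear part $X_i$ alone, and dominance and generic finiteness follow as in Proposition~\ref{prop:formal}; your explicit diamond convention $x_{A,i}=C(x_{A',j},x_{A',i})$ and the check that each conjugator has invertible constant part $\lambda_j-\lambda_i$ simply make precise what the paper leaves implicit.
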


\begin{proof}
Every neighborhood pseudo-root is obtained by repeatedly applying $C$, and at
each application Lemma~\ref{lem:first-order} preserves the perturbation in the
direction of the second argument.  Therefore the $i$th seed has first-order
term equal to the perturbation of the $i$th bottom pseudo-root and no other
first-order term.
\end{proof}

\begin{theorem}\label{thm:noncographs}
If $G$ contains an induced $P_4$, then its neighborhood seed map $F_G$ has no
free rational inverse.  More precisely,
\[
 \deg(F_{G,n})\ge3\qquad(n\ge2).
\]
\end{theorem}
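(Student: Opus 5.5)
Fix an induced path $P_4$ on vertices $V=\{v_1,v_2,v_3,v_4\}\subseteq[m]$, with edges $v_1v_2$, $v_2v_3$, $v_3v_4$. We show $\deg(F_{G,n})\ge3$ by embedding the $P_4$ fiber of Proposition~\ref{prop:all-degrees} into a fiber of $F_{G,n}$. Dominance and generic finiteness of $F_{G,n}$ come from Lemma~\ref{lem:graph-jacobian}. The non-existence of a free rational inverse then follows exactly as in the proof of Theorem~\ref{thm:main}: Lemma~\ref{lem:free-degree} would force $\deg(F_{G,n})=1$ for some $n\ge2$.

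For the construction, assign to every bottom pseudo-root outside $V$ a distinct central scalar $\mu_j I_n$. Choose these scalars different from $0,1,2,3$ and generic otherwise. Put on $V$ the four $n\times n$ block-amplified inputs from the proof of Proposition~\ref{prop:all-degrees}, that is, the formal branch $(A,I+B,2I+G,3I+D)$ with scalar blocks appended, together with its Galois conjugates over $\Q(t)$. By Lemma~\ref{lem:central}, the seed $x_{N_G(i),i}$ for $i\in V$ reduces to $x_{N_G(i)\cap V,i}$ computed in $G[V]=P_4$. This is exactly the corresponding coordinate of $F_{P_4}$. For $i\notin V$ the seed remains $\mu_i I_n$. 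Hence all $\ge3$ conjugate inputs map to the same output: the $P_4$ output curve on $V$, and fixed scalars elsewhere. The inputs are distinct because their $V$-parts are distinct.

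The step needing care is simplicity of these points, together with their lying in the domain of $F_{G,n}$. On the original branch, the input tends at $t=0$ to the tuple of pairwise distinct scalars $(0,1,2,3,\mu_j)$. Each inverse that appears in $F_{G,n}$ is therefore a power series with invertible constant term, and Lemma~\ref{lem:graph-jacobian} makes the full Jacobian determinant $1+O(t)$. All these quantities lie in the finite extension $L$ (extended by constants in $\Q$, since the $\mu_j$ can be taken rational). They are nonzero elements of $L$, so their images under every $\Q(t)$-embedding are nonzero as well. Thus each conjugate point is in the domain with nonzero Jacobian.

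One subtlety is that Lemma~\ref{lem:central} only says that inverses appearing in the reduction exist. I would verify, by the same inductive diamond argument, that each difference $y-x$ arising in the construction of a seed is either a nonzero scalar difference $\mu_i-\mu_j$, or a $\mu-$(branch entry) that is invertible at $t=0$, or a $P_4$ difference already handled in the $P_4$ case. Lemma~\ref{lem:simple-fiber} then gives $\deg(F_{G,n})\ge3$ for every $n\ge2$.
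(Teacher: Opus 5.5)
Your proposal is correct and follows essentially the same route as the paper: central rational scalars off the induced $P_4$, Lemma~\ref{lem:central} to reduce the seeds on $V$ to $F_{P_4}$, the Galois conjugates of the (block-amplified) formal branch as at least three simple points of one fiber via Lemma~\ref{lem:graph-jacobian} and Lemma~\ref{lem:simple-fiber}, and Lemma~\ref{lem:free-degree} to exclude a free rational inverse. The differences are cosmetic. You work at size $n$ directly, whereas the paper treats $n=2$ first and then amplifies by direct sums. You also spell out the denominator check that the paper compresses into ``all defining denominators have invertible scalar terms at $t=0$''; your own observation already settles this, since at a tuple of pairwise distinct scalars every conjugator is a nonzero scalar difference.
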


\begin{proof}
Let $V=\{v_1,v_2,v_3,v_4\}$ induce the path
$v_1-v_2-v_3-v_4$.  Assign the scalar centers $0,1,2,3$ to these four bottom
directions and assign pairwise distinct rational scalar centers, different
from $0,1,2,3$, to all remaining directions.

For $2\times2$ matrices, place the formal preimage
\eqref{eq:preimage} in the four directions in $V$ and keep every outside input
equal to its scalar center.  Lemma~\ref{lem:central} shows that its image under
$F_{G,2}$ is the curve \eqref{eq:curve} in the four directions in $V$ and is
constant in every outside direction.  All defining denominators have
invertible scalar terms at $t=0$.

The field of entries of this branch contains $z(t)$ and therefore has degree at
least three over $\Q(t)$.  Lemma~\ref{lem:graph-jacobian} gives Jacobian
determinant $1+O(t)$ on the original branch, and its field conjugates remain
nonzero.  As in Proposition~\ref{prop:all-degrees}, there are at least three
distinct simple points in one fiber, so
$\deg(F_{G,2})\ge3$.  Direct-sum amplification with the scalar centers gives
the same conclusion for every $n>2$.

If $F_G$ had a free rational inverse, Lemma~\ref{lem:free-degree} would make
some $F_{G,n}$ birational, contradicting the degree bound.
\end{proof}

Since a graph is a cograph exactly when it has no induced $P_4$, combining
Theorem~\ref{thm:noncographs} with the cograph diamond-generation theorem
\cite[Theorem~1.1(ii)]{Ramani2026} gives the full rational classification.

\begin{corollary}[Rational cograph classification]\label{cor:cograph}
For every finite graph $G$ over a characteristic-zero base field,
\[
 Y_G\text{ rationally generates all Boolean-lattice pseudo-roots}
 \quad\Longleftrightarrow\quad
 G\text{ is a cograph}.
\]
\end{corollary}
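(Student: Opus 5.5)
The plan is to prove the two directions separately, combining Theorem~\ref{thm:noncographs} with the diamond-generation result of \cite{Ramani2026}. We interpret ``$Y_G$ rationally generates all pseudo-roots'' as the equality of free skew subfields
\[
 K\langle Y_G\rangle_{\rat}=K\langle x_{\varnothing,1},\ldots,x_{\varnothing,m}\rangle_{\rat}.
\]
Every pseudo-root is a free rational expression in the bottom pseudo-roots, obtained by iterating $C$ from $\varnothing$. So this equality holds exactly when each bottom pseudo-root is a free rational function of $Y_G$, that is, when $F_G$ has a free rational inverse. This reduction should be stated first, so that both directions concern the same object.

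For the direction ``cograph $\Rightarrow$ rational generation'', invoke \cite[Theorem~1.1(ii)]{Ramani2026}. For a cograph $G$, every pseudo-root, in particular every bottom pseudo-root, is reached from $Y_G$ by finitely many diamond operations. Each such operation is a $C$ or a $D$, by Lemma~\ref{lem:diamond-inversion}, and is therefore a free rational expression in the free skew field. The inverses that occur are inverses of differences $y-x$ of distinct pseudo-roots. These differences are nonzero elements of the free skew field, since at a generic scalar point they have distinct constant terms by Lemma~\ref{lem:first-order}. Hence every step is legitimate in the skew field, and the bottom pseudo-roots lie in $K\langle Y_G\rangle_{\rat}$. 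The point requiring a little care is that diamond closure in \cite{Ramani2026} may be phrased combinatorially or over a specific base field. I would check that its proof only composes $C$ and $D$, which are defined over $\Q$, so the conclusion transfers to any characteristic-zero field $K$.

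For the direction ``rational generation $\Rightarrow$ cograph'', argue by contraposition. If $G$ is not a cograph, it contains an induced $P_4$ by \cite{CorneilLerchsBurlingham1981}. Theorem~\ref{thm:noncographs} then shows that $F_G$ has no free rational inverse. By the reduction above, $Y_G$ does not rationally generate the bottom pseudo-roots, and hence does not generate all pseudo-roots. The remark following Proposition~\ref{prop:all-degrees} makes the base-field independence explicit: the degree bound is geometric, so it holds over any characteristic-zero field.

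I expect no genuine obstacle, since the heavy lifting is already done. The only delicate step is the bookkeeping in the first paragraph: rational generation of all pseudo-roots is equivalent to the existence of a free rational left inverse of $F_G$, and diamond generation yields honest free rational expressions rather than identities valid only on some matrix domain.
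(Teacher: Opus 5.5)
Your proposal is correct and follows the same route as the paper. The cograph direction comes from the diamond-generation theorem together with the rationality of the moves $C$ and $D$, and the converse comes from the induced-$P_4$ characterization of cographs plus Theorem~\ref{thm:noncographs}. Your extra bookkeeping makes explicit two points the paper leaves implicit: that rational generation of all pseudo-roots is equivalent to rational recovery of the bottom pseudo-roots, and that the denominators $y-x$ are nonzero in the free skew field because they have nonzero constant terms at a scalar point.
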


\begin{proof}
If $G$ is a cograph, the diamond-generation theorem recovers every pseudo-root
from $Y_G$, and diamond moves are rational operations.  If $G$ is not a
cograph, it contains an induced $P_4$, so Theorem~\ref{thm:noncographs} rules
out rational recovery even of all bottom pseudo-roots.
\end{proof}

The matrix-degree proof of Theorem~\ref{thm:noncographs} is essential here.
Merely substituting central scalars into a hypothetical free rational inverse
would not be sufficient: its denominators could vanish identically on the
specialized locus.  By contrast, the explicit formal branch is defined there,
and its simple conjugate points force a degree bound on the unspecialized
generic matrix map.

\appendix

\section{Reproducibility of the exact calculation}

The ancillary script \nolinkurl{p4_matrix_degree_certificate.py} was verified
with Python~3.12.13 and SymPy~1.14.0 \cite{MeurerEtAl2017}.  A matching
environment can be prepared with
\begin{verbatim}
python3 -m pip install sympy==1.14.0
\end{verbatim}
and the certificate is run with
\begin{verbatim}
python3 p4_matrix_degree_certificate.py
\end{verbatim}
It checks the Pauli multiplication rules and reversal action, substitutes the
formal jet \eqref{eq:formal-jet} into all four seed equations, verifies
\eqref{eq:Aexplicit}, constructs the two branch factors $S,N$, checks their
degrees and the resultant identity \eqref{eq:resultant}, and verifies the
primitive irreducible specialization used in Lemma~\ref{lem:irreducible}.
Every failed check raises an explicit exception; the script does not rely on
Python assertions, which can be disabled by optimization flags.

\end{document}